\documentclass[11pt]{amsart}

\usepackage{comment}

\newtheorem{theorem}{Theorem}[section]

\newtheorem{corollary}[theorem]{Corollary}

\newtheorem{lemma}[theorem]{Lemma}

\newtheorem{proposition}[theorem]{Proposition}

\newtheorem*{theorem*}{Theorem}
\newtheorem*{corollary*}{Corollary}

\theoremstyle{definition}

\usepackage[colorlinks, bookmarks=true]{hyperref}
\usepackage{color,graphicx,shortvrb}

\usepackage{enumerate}
\usepackage{amsmath}
\usepackage{amssymb}

\def\query#1{\setlength\marginparwidth{80pt}%
\marginpar{\raggedright\fontsize{10}{10}\selectfont\itshape
\hrule\smallskip
{\textcolor{red}{#1}}\par\smallskip\hrule}}

\newcommand{\R}{\mathbb{R}}

\newcommand{\triple}[1]{{\left\vert\kern-0.25ex\left\vert\kern-0.25ex\left\vert #1 
    \right\vert\kern-0.25ex\right\vert\kern-0.25ex\right\vert}}

\begin{document}

\numberwithin{equation}{section}


\title[Order extreme points]{Corrigendum: order extreme points \\ and solid convex hulls}



\author{A.~Ianina and T.~Oikhberg}

\address{A.I. and T.O.: Dept.~of Mathematics, University of Illinois, Urbana IL 61801, USA}
\email{aianina2@illinois.edu, oikhberg@illinois.edu}

 \author{M.A.~Tursi}

\address{M.A.T.: Independent researcher}
\email{maryangelica.tursi@gmail.com}

\date{\today}

\subjclass[2010]{46B22, 46B42}

\keywords{Banach lattice, extreme point, convex hull, Radon-Nikod{\'y}m Property}



\maketitle

\parindent=0pt
\parskip=3pt

\begin{abstract}
We correct some errors found in [T.~Oikhberg and M.A.~Tursi,
{\it Order extreme points and solid convex hulls},
The Mathematical Legacy of Victor Lomonosov (ed.~R.~Aron et.al.), de Gryuter, 2020, 297--315.]
\end{abstract}

\maketitle
\thispagestyle{empty}


The goal of this note is two-fold: (i) to correct some annoying typos in \cite{OiTu}, and (ii) to correct some of the proofs. The enumeration of sections and statements (theorems, definitions, examples etc.) in the Arxiv preprint is different from that in the published version; the numbers from the published version will be given in square brackets.

{\bf (1)} In several statements in Sections 3 [19.3] and 4 [19.4], the convexity assumptions are omitted. Specifically:

$\bullet$ Proposition 3.1 [19.7] -- Separation -- should read: \\
Suppose $\tau$ is a sufficiently rich topology on a Banach lattice $X$, and $A \subset X_+$ is a $\tau$-closed positive-solid bounded {\bf convex} subset of $X_+$. Suppose, furthermore, $x \in X_+$ does not belong to $A$. Then there exists $f \in X^\tau_+$ so that $f(x) > \sup_{a \in A} f(a)$.

$\bullet$ Theorem 4.1 [19.10] -- ``Solid'' Krein-Milman -- should read: \\
Any $\tau$-compact positive-solid {\bf convex} subset $A$ of $X_+$ coincides with the $\tau$-closed positive-solid convex hull of its order extreme points.

$\bullet$ Corollary 4.2 [19.11] should read: \\
Any $\tau$-compact solid {\bf convex} subset of $X$ coincides with the $\tau$-closed solid convex hull of its order extreme points.

Without convexity, these results may fail. For instance, for Corollary 4.2, we can take $X = (\R^2, \| \cdot \|_\infty)$, and $A = \{(x,0) : |x| \leq 1\} \cup \{(0,y) : |y| \leq 1\}$. The set $A$ is closed when $\tau$ is the norm topology and solid, but the convex hull of $A$ is much larger than $A$ itself.

{\bf (2)} There is a hole in the proof of Theorem 7.1 [19.30]: on page 17 line -2 [page 313 line -6], it is claimed that, for any $x \in C_+ \backslash \{0\}$, we have $x \wedge u \neq 0$. The current argument doesn't permit to reach this conclusion. However, this can be fixed as follows:

\begin{proof}[Patch on the proof of Theorem 19.30.]
Denote by $B \subset X$ the band generated by $u$ in $X$. By \cite[Proposition 1.2.3]{M-N}, $B$ is closed (and solid); by \cite[Proposition 1.2.3]{M-N}, $B_+$ consists of all $z \in X_+$ so that $z = \vee_n (z \wedge nu)$.
As $u$ is a quasi-interior point of $Y$, we conclude that $C' \subset Y \subset B$, hence $C \subset B$. So, $x = \vee_n (x \wedge nu)$, which implies $x \wedge nu \neq 0$ for large enough $n$. As $x \wedge nu \leq n (x \wedge u)$, we conclude that $x \wedge u \neq 0$.
%
\end{proof}

{\bf (3)} The proof of the implication $(1) \Rightarrow (2)$ in Proposition 4.3 [19.12] is incorrect: the set $D$ constructed on page 10 [306], first paragraph, is not solid, hence it is not guaranteed to have any order extreme points.

Proposition 4.3 [19.12] can be established in a rather roundabout way. More specifically, Theorem 7.1 [19.30] implies:

\begin{corollary*}
For a Banach lattice $X$, the following statements are equivalent:
\begin{enumerate}
	\item $X$ has the Radon-Nikodym Property $($RNP$)$.
	\item $X$ has the Krein-Milman Property $($KMP$)$.
	\item $X$ has the Solid Krein-Milman Property $($SKMP$)$ -- that is, any closed bounded convex solid subset of $X$ is the closure of the solid convex hull of its order extreme points.
	\item Any closed bounded convex solid subset of $X$ has an order extreme points.
\end{enumerate}
\end{corollary*}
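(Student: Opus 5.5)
We will establish the cycle $(1)\Rightarrow(2)\Rightarrow(3)\Rightarrow(4)\Rightarrow(1)$.

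\textbf{The implications that are known or routine.} The equivalence $(1)\Leftrightarrow(2)$ for Banach lattices is classical. RNP implies KMP in any Banach space, by Lindenstrauss. The converse (KMP implies RNP) is Schachermayer's theorem for Banach lattices, and more generally holds for spaces isomorphic to their squares. For $(2)\Rightarrow(3)$, we intend to use the order version of Krein--Milman already available in the paper (Corollary 4.2 [19.11], corrected above) in its norm form. Alternatively, we can argue directly. Let $A$ be closed, bounded, convex and solid. By KMP, $A$ is the closed convex hull of its extreme points. An extreme point $a$ of a solid convex set $A$ is order extreme in the sense of the paper. The point is that if $a\le$ a convex combination of elements of $A_+$ (passing to $|a|$, which is also extreme by solidity), one can recover $a$ as the same convex combination of elements of $A$. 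Hence the closed solid convex hull of the order extreme points contains all extreme points, and therefore contains $A$. The reverse inclusion holds because $A$ is closed, convex and solid. The implication $(3)\Rightarrow(4)$ is immediate: a nonzero set that is the closure of the solid convex hull of its order extreme points must have some. If $A=\{0\}$, then $0$ itself is order extreme.

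\textbf{The main step: $(4)\Rightarrow(1)$.} This is where Theorem 7.1 [19.30] is used. As I read it, that theorem gives the following. If $X$ fails the RNP, there is a closed bounded convex solid set, built from a bounded convex set $C'$ lying in a sublattice $Y$ with quasi-interior point $u$, which has no order extreme points. The argument goes through the patch above, which guarantees $x\wedge u\ne0$ for every nonzero $x\in C_+$. The plan is to take the contrapositive. Assume $X$ lacks the RNP. By Theorem 7.1, produce a nonempty closed bounded convex solid $C\subset X$ with no order extreme points. This contradicts (4).

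\textbf{Where the difficulty lies.} The expected obstacle is checking that the set produced by Theorem 7.1 really is solid and convex, and that it has no order extreme points at all, not merely no extreme points. Convexity matters in view of correction (1). The set constructed in the original proof of Proposition 4.3 [19.12] failed exactly at solidity. So I would verify directly that the solid convex hull of $C'$ (intersected with the ball, then closed) remains solid. Then I would use the dentability failure together with $x\wedge u\neq0$ to show that every candidate order extreme point can be written as a nontrivial combination.
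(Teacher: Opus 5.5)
Your plan follows the paper's proof step for step: $(1)\Leftrightarrow(2)$ is quoted from the literature; $(2)\Rightarrow(3)$ relates extreme points of a solid convex set to its order extreme points, which the paper does by citing Theorem 2.2 [19.2] and Corollary 2.4 [19.4] of the original; $(3)\Rightarrow(4)$ is immediate; and $(4)\Rightarrow(1)$ is the observation that the construction in the proof of Theorem 7.1 [19.30] gives, for $X$ failing the RNP, a closed bounded convex solid set with no order extreme points. A few corrections are needed, though none affects the overall structure:
\begin{enumerate}
\item Drop your first suggestion for $(2)\Rightarrow(3)$. Corollary 4.2 [19.11] needs $\tau$-compactness, which a closed bounded set in an infinite-dimensional KMP space need not have.
\item In the direct route, only positive extreme points can be order extreme, so your claim should be made for $|a|$, not for $a$.
\item The implication ``$a$ extreme $\Rightarrow$ $|a|$ extreme'' needs a genuine lattice argument, not solidity alone; in the paper this is supplied by the cited results from the original.
\item KMP$\Rightarrow$RNP for Banach lattices is the Bourgain--Talagrand theorem. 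Schachermayer's result concerns spaces isomorphic to their squares, which Banach lattices need not be.
\end{enumerate}
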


Above, item (4) is a \emph{verbatim} repetition of Proposition 4.3[19.12](1), while item (3) is nothing but a restatement of Proposition 4.3[19.12](2).

\begin{proof}
The implication $(3) \Rightarrow (4)$ is immediate, and $(2) \Rightarrow (3)$ follows by combining Theorem 2.2 [19.2] with Corollary 2.4 [19.4]. The equivalence $(1) \Leftrightarrow (2)$ is known, see the references cited in \cite{OiTu}.

The proof of  Theorem 7.1 [19.30] actually produces, for any Banach lattice $X$ failing the RNP, a closed bounded convex solid $E \subset X$ without any order extreme points. This gives $\lnot (1) \Rightarrow \lnot (4)$, or equivalently, $(4) \Rightarrow (1)$.
\end{proof}

\end{document}